\newcommand{\Rmnum}[1]{\expandafter\@slowromancap\romannumeral #1@}
\begin{document}

\newtheorem{theorem}{Theorem}
\newtheorem{observation}[theorem]{Observation}
\newtheorem{corollary}[theorem]{Corollary}
\newtheorem{algorithm}[theorem]{Algorithm}
\newtheorem{definition}[theorem]{Definition}
\newtheorem{guess}[theorem]{Conjecture}
\newtheorem{claim}[theorem]{Claim}
\newtheorem{problem}[theorem]{Problem}
\newtheorem{question}[theorem]{Question}
\newtheorem{lemma}[theorem]{Lemma}
\newtheorem{proposition}[theorem]{Proposition}
\newtheorem{fact}[theorem]{Fact}

\makeatletter
  \newcommand\figcaption{\def\@captype{figure}\caption}
  \newcommand\tabcaption{\def\@captype{table}\caption}
\makeatother

\newtheorem{acknowledgement}[theorem]{Acknowledgement}

\newtheorem{axiom}[theorem]{Axiom}
\newtheorem{case}[theorem]{Case}
\newtheorem{conclusion}[theorem]{Conclusion}
\newtheorem{condition}[theorem]{Condition}
\newtheorem{conjecture}[theorem]{Conjecture}
\newtheorem{criterion}[theorem]{Criterion}
\newtheorem{example}[theorem]{Example}
\newtheorem{exercise}[theorem]{Exercise}
\newtheorem{notation}{Notation}
\newtheorem{solution}[theorem]{Solution}
\newtheorem{summary}[theorem]{Summary}

\newenvironment{proof}{\noindent {\bf
Proof.}}{\rule{3mm}{3mm}\par\medskip}
\newcommand{\remark}{\medskip\par\noindent {\bf Remark.~~}}
\newcommand{\pp}{{\it p.}}
\newcommand{\de}{\em}
\newcommand{\mad}{\rm mad}
\newcommand{\qf}{Q({\cal F},s)}
\newcommand{\qff}{Q({\cal F}',s)}
\newcommand{\qfff}{Q({\cal F}'',s)}
\newcommand{\f}{{\cal F}}
\newcommand{\ff}{{\cal F}'}
\newcommand{\fff}{{\cal F}''}
\newcommand{\fs}{{\cal F},s}
\newcommand{\s}{\mathcal{S}}
\newcommand{\G}{\Gamma}
\newcommand{\g}{(G_3, L_{f_3})}
\newcommand{\wrt}{with respect to }
\newcommand {\nk}{ Nim$_{\rm{k}} $  }

\newcommand{\q}{\uppercase\expandafter{\romannumeral1}}
\newcommand{\qq}{\uppercase\expandafter{\romannumeral2}}
\newcommand{\qqq}{\uppercase\expandafter{\romannumeral3}}
\newcommand{\qqqq}{\uppercase\expandafter{\romannumeral4}}
\newcommand{\qqqqq}{\uppercase\expandafter{\romannumeral5}}
\newcommand{\qqqqqq}{\uppercase\expandafter{\romannumeral6}}

\newcommand{\qed}{\hfill\rule{0.5em}{0.809em}}

\newcommand{\var}{\vartriangle}

\title{{\large \bf The  Alon-Tarsi number of planar graphs - a simple proof    }}

\author{ Yangyan Gu\thanks{Department of Mathematics, Zhejiang Normal University,  China.  E-mail: yangyan@zjnu.edu.cn.} \and Xuding Zhu\thanks{Department of Mathematics, Zhejiang Normal University,  China.  E-mail: xdzhu@zjnu.edu.cn.  NSFC 11971438,U20A2068, ZJNSFC LD19A010001.}}

\maketitle

\begin{abstract}
	
	This paper gives a simple proof  of the result  that every planar graph $G$ has Alon-Tarsi number at most 5, and  has a matching $M$ such that $G-M$ has Alon-Tarsi number at most 4. 
	 
\noindent {\bf Keywords:}
planar graph;     list colouring;   Alon-Tarsi number.

\end{abstract}


Assume $D$ is an orientation of a graph $G$. 
For a subset $H$ of arcs of $D$, $D[H]$ denotes the sub-digraph induced by $H$. 
We say $D[H]$ is an {\em Eulerian sub-digraph} if   $d_{D[H]}^+(v)=d_{D[H]}^-(v)$ for each vertex $v$. 
Let  $$\mathcal{E}(D) = \{H: D[H] \text{ is an Eulerian sub-digraph}\},$$ 
 $$\mathcal{E}_e(D) = \{H \in \mathcal{E}(D): |H| \text{ is even}\},  \ \mathcal{E}_o(D) = \{H \in \mathcal{E}(D): |H| \text{ is odd}\},  {\rm diff}(D)= |\mathcal{E}_e(D)|-|\mathcal{E}_o(D)|.$$ 
We  say $D$ is an {\em Alon-Tarsi orientation}   if ${\rm diff}(D) \ne 0$. 

Let $\mathbb{N}=\{0,1,2,\ldots\}$ and $\mathbb{N}^G = \{f: V \to \mathbb{N}\}$. For $f,g \in \mathbb{N}^G$, we write $g \le f$ if $g(v) \le f(v)$ for all $v \in V(G)$. 
For  $f  \in \mathbb{N}^G$, an {\em $f$-Alon-Tarsi orientation} (an $f$-AT orientation, for short) of   $G$ is an Alon-Tarsi orientation $D$ of $G$ with $d_D^+(v) \le f(v)-1$ for each vertex $v$. We say $G$ is {\em $f$-AT} if  $G$ has an $f$-AT orientation.   The {\em Alon-Tarsi number} $AT(G)$ of $G$ is the minimum $k$ such that $G$ is $k$-AT (i.e., $f$-AT for the constant function $f(v)=k$ for all $v$).
Let $ch(G)$ be the choice number of $G$, and $\chi_P(G)$ be the paint number of $G$. It is well-known \cite{ZB} that for any graph $G$, 
$$ch(G) \le \chi_P(G) \le AT(G).$$ 
It is known that for any  planar graph $G$,  
   $ch(G) \le 5$ \cite{TH},     $\chi_P(G) \le 5$ \cite{Scha},        $AT(G) \le 5$   \cite{Zhu}, and  $G$ has a matching $M$ such that $AT(G-M) \le 4$ \cite{GZ}. This note gives a simple proof of the last two results. Note that $AT(G) \le 5$   implies that $ch(G) \le \chi_P(G) \le 5$.

For $f \in  \mathbb{N}^G$ and $X \subseteq V(G)$, let $f_{[X,-1]} \in  \mathbb{N}^G$ be defined as
\[
f_{[X,-1]}(v) = \begin{cases} f(v)-1, &\text{ if $v \in X$},  \cr
f(v), &\text{ otherwise.}
\end{cases}
\]   We write $ f_{[v,-1]}$ for $f_{[\{v\},-1]}$. 

The following lemma is folklore (cf. Proof of Lemma 3.1.7 in \cite{ZB}). 
\begin{lemma}
	\label{lem-subgraph}
	If $G$ is $f$-AT and $e=uv \in E(G)$, then $G-e$ is $f_{[u,-1]}$-AT or $f_{[v,-1]}$-AT.
\end{lemma}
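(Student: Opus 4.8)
The plan is to massage the given $f$-AT orientation $D$ of $G$ into an Alon-Tarsi orientation of $G-e$ whose out-degrees respect either $f_{[u,-1]}$ or $f_{[v,-1]}$. Relabelling $u,v$ if necessary, assume that in $D$ the edge $e$ is oriented from $u$ to $v$. The first candidate is simply $D' := D-e$, viewed as an orientation of $G-e$: then $d^+_{D'}(u) = d^+_D(u)-1 \le f(u)-2$ and $d^+_{D'}(w)=d^+_D(w)\le f(w)-1$ for $w\ne u$, so $D'$ obeys the bound $f_{[u,-1]}$. Hence if ${\rm diff}(D')\ne 0$ we are done immediately, and the real work is the case ${\rm diff}(D')=0$.

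To handle that case, split $\mathcal E(D)$ according to whether the arc $e$ is used. The members avoiding $e$ are exactly the members of $\mathcal E(D')$; the members containing $e$ are the sets $K\cup\{e\}$, where $K$ ranges over the family $\mathcal K$ of arc-subsets of $D'$ that are balanced at every vertex except for an out-excess of $1$ at $v$ and an in-excess of $1$ at $u$ (this is forced by $e$ contributing $+1$ to $d^+(u)$ and to $d^-(v)$). Writing $\delta := \sum_{K\in\mathcal K}(-1)^{|K|}$ and using $|K\cup\{e\}| = |K|+1$, this decomposition yields ${\rm diff}(D) = {\rm diff}(D') - \delta$. Since ${\rm diff}(D)\ne 0$, the assumption ${\rm diff}(D')=0$ forces $\delta\ne 0$; in particular $\mathcal K\ne\emptyset$.

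Now fix any $K_0\in\mathcal K$. Being balanced everywhere except for an out-excess at $v$ and an in-excess at $u$, $K_0$ contains a directed path $P_0$ from $v$ to $u$ (a standard flow-decomposition fact). Let $D_2$ be the orientation of $G-e$ obtained from $D'$ by reversing every arc of $P_0$. Reversing a $v$--$u$ directed path lowers the out-degree of $v$ by $1$, raises that of $u$ by $1$, and changes no other out-degree, so $d^+_{D_2}(v) = d^+_D(v)-1 \le f(v)-2$, $d^+_{D_2}(u)=d^+_D(u)\le f(u)-1$, and $d^+_{D_2}(w)\le f(w)-1$ otherwise; thus $D_2$ obeys $f_{[v,-1]}$. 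It remains to show ${\rm diff}(D_2)\ne 0$, and the key step is the map $\Phi:\mathcal E(D_2)\to\mathcal K$ that keeps every arc off $P_0$ unchanged and toggles the membership of every arc on $P_0$ (recording each such arc with its $D'$-orientation). A vertex-by-vertex balance count shows that, for any arc-set $S$ of $D_2$, the imbalance of $\Phi(S)$ in $D'$ equals the imbalance of $S$ in $D_2$ plus the imbalance of $P_0$ in $D'$; hence $\Phi$ restricts to a bijection from $\mathcal E(D_2)$ onto $\mathcal K$, and since $|\Phi(S)| \equiv |S| + |P_0| \pmod 2$ we get ${\rm diff}(D_2) = (-1)^{|P_0|}\delta \ne 0$. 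Therefore $D_2$ is an $f_{[v,-1]}$-AT orientation of $G-e$, and the lemma follows.

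The only delicate point is this last step: checking that the toggle-along-$P_0$ map really is a well-defined bijection onto $\mathcal K$ and that it shifts $|\cdot|$ by exactly $|P_0|$ modulo $2$; everything else is bookkeeping. One can also shortcut the whole argument through graph polynomials: with a fixed vertex order having $u<v$ one has $P_G = (x_u-x_v)\,P_{G-e}$, and the Alon--Tarsi correspondence says a monomial has nonzero coefficient in $P_H$ iff $H$ has an Alon--Tarsi orientation with the corresponding out-degree sequence; the coefficient of the monomial witnessing that $G$ is $f$-AT then splits as a difference of two coefficients of $P_{G-e}$, at least one of which is nonzero, and these two correspond to an $f_{[u,-1]}$-AT and an $f_{[v,-1]}$-AT orientation of $G-e$ respectively.
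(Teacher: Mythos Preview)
Your argument is correct. The toggle-along-$P_0$ bijection between $\mathcal E(D_2)$ and $\mathcal K$ is the heart of the matter, and the balance/parity bookkeeping you sketch goes through exactly as you say; the one place a skeptical reader might pause---that $K_0$ really contains a directed $v$--$u$ path---is handled by the standard greedy-walk argument you cite.

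As for comparison: the paper does not prove this lemma at all, but records it as folklore with a reference to Lemma~3.1.7 of the Zhu--Balakrishnan book on the Combinatorial Nullstellensatz. That source almost certainly argues via the graph polynomial, exactly the shortcut you outline in your final paragraph: writing $P_G=(x_u-x_v)P_{G-e}$ and reading off that a nonzero coefficient in $P_G$ forces a nonzero coefficient in $P_{G-e}$ corresponding to one of the two reduced degree sequences. So your closing remark is essentially the intended proof. Your main argument, by contrast, is a purely combinatorial path-reversal proof that never invokes polynomials; it is longer but has the virtue of being self-contained and of making transparent where the second orientation $D_2$ comes from (namely, from reversing a $v$--$u$ path sitting inside the Eulerian subdigraphs that use $e$). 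Either route is fine for this lemma; the polynomial version is the two-line proof, and yours is the one that shows what is actually happening at the level of orientations.
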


 \begin{observation}
 	 \label{obs-1}
If $G$ is $f$-AT, and $D$ is an $f$-AT orientation of $G$, then $f(x) \ge d_D^+(x)+1 \ge 1$ for each vertex $x$. If $f(x)=1$, then $d_D^+(x) =0$, and hence any neighbour $y$ of $x$ has $d_D^+(y) \ge 1$ and $f(y) \ge 2$. 
	 Therefore the following hold:
	 \begin{itemize}
	 	\item[(A)]  If $G$ is $f$-AT, $e=uv \in E(G)$, and $f(v)=1$, then $G-e$ is $f_{[u,-1]}$-AT.
	 	\item[(B)] If $G$ is $f$-AT, $e=uv, e'=vw \in E(G)$, and $f(v)=2$, $f(w)=1$, then $G-e$ is $f_{[u,-1]}$-AT.
	 	\end{itemize}	 
	 \end{observation}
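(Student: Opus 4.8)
The first assertion is just the definition unwound: if $D$ is an $f$-AT orientation of $G$, then $d_D^+(x)\le f(x)-1$ for every $x$, so $f(x)\ge d_D^+(x)+1\ge 1$; and if $f(x)=1$ then $d_D^+(x)=0$, so for each neighbour $y$ of $x$ the arc of $D$ joining $x$ and $y$ has head $x$, hence is counted in $d_D^+(y)$, giving $d_D^+(y)\ge 1$ and $f(y)\ge 2$. I would record this in one sentence.

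For (A), the plan is to apply Lemma~\ref{lem-subgraph}: since $G$ is $f$-AT and $e=uv\in E(G)$, the graph $G-e$ is $f_{[u,-1]}$-AT or $f_{[v,-1]}$-AT. The second option is impossible, because $f_{[v,-1]}(v)=f(v)-1=0$ while the first assertion (applied to $G-e$) says that every vertex of a graph admitting a $g$-AT orientation has $g$-value at least $1$. Hence $G-e$ is $f_{[u,-1]}$-AT.

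For (B), I would again apply Lemma~\ref{lem-subgraph} and rule out the possibility that $G-e$ is $f_{[v,-1]}$-AT. First note $u\ne w$ (otherwise $e=e'$), so the edge $e'=vw$ still lies in $G-e$. If $G-e$ had an $f_{[v,-1]}$-AT orientation, then since $f_{[v,-1]}(v)=f(v)-1=1$, the last sentence of the first assertion (applied to $G-e$, the function $f_{[v,-1]}$, and $x=v$) would force every neighbour of $v$ in $G-e$, in particular $w$, to have $f_{[v,-1]}$-value at least $2$; but $f_{[v,-1]}(w)=f(w)=1$, a contradiction. Hence $G-e$ is $f_{[u,-1]}$-AT.

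There is no genuine obstacle here; the only points needing a little care are that the two consequences of the first assertion are being applied to the graph $G-e$ rather than to $G$, and that the auxiliary edge $vw$ really does survive the deletion of $e$ — which is exactly why the hypothesis $u\ne w$ (implicit in "$e,e'\in E(G)$" being two edges) matters.
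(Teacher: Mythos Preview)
Your argument is correct and matches the paper's intended reasoning. The paper states this as an observation with no separate proof environment: the first paragraph is the whole justification, and the word ``Therefore'' signals that (A) and (B) are meant to follow immediately from it together with Lemma~\ref{lem-subgraph}, exactly as you spell out.
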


\begin{corollary}
	\label{cor-1}
	Assume $d_G(w)=2$, $N_G(w)=\{u,v\}$, $f: V(G) \to \mathbb{N}$  and $f(w)=2$. If $G$ is $f$-AT,  then $G-w$ is $f_{[u,-1]}$-AT or $f_{[v,-1]}$-AT.
\end{corollary}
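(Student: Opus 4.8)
The plan is to peel the two edges at $w$ off one at a time. First I would apply Lemma~\ref{lem-subgraph} to the edge $e=uw$, obtaining that $G-uw$ is $f_{[u,-1]}$-AT or $f_{[w,-1]}$-AT. In $G-uw$ the vertex $w$ has degree $1$ with unique neighbour $v$, so the second step peels off $e'=vw$, and the two outcomes of the first step are handled differently.

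If $G-uw$ is $f_{[w,-1]}$-AT, then $f_{[w,-1]}(w)=f(w)-1=1$, and Observation~\ref{obs-1}(A), applied to the edge $vw$ whose endpoint $w$ now has budget $1$, gives that $(G-uw)-vw$ is $(f_{[w,-1]})_{[v,-1]}$-AT. If instead $G-uw$ is $f_{[u,-1]}$-AT, then Lemma~\ref{lem-subgraph} applied to $vw$ gives that $(G-uw)-vw$ is $f_{[\{u,v\},-1]}$-AT or $(f_{[u,-1]})_{[w,-1]}$-AT. (Here I use $u\ne v$, which holds since $N_G(w)=\{u,v\}$ with $d_G(w)=2$.)

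Finally I would remove the clutter. Since $d_G(w)=2$, the digraph $(G-uw)-vw$ is precisely $G-w$ with $w$ re-attached as an isolated vertex, and an isolated vertex with budget $\ge 1$ changes neither the set of Eulerian sub-digraphs nor ${\rm diff}$, and imposes no constraint beyond having positive budget. In each of the three cases above $w$ does have positive budget ($f(w)-1=1$ in two of them, $f(w)=2$ in the third), so $G-w$ inherits the property, with budget function the restriction to $V(G)\setminus\{w\}$ of $f_{[v,-1]}$, $f_{[\{u,v\},-1]}$, or $f_{[u,-1]}$, respectively. Since increasing the budget pointwise preserves being AT and $f_{[\{u,v\},-1]}\le f_{[v,-1]}$, in all cases $G-w$ is $f_{[u,-1]}$-AT or $f_{[v,-1]}$-AT.

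I do not expect a genuine obstacle here; the points to watch are that $f(w)=2$ is exactly what keeps $w$'s budget at $1\ge 1$ after the first peel, so that Observation~\ref{obs-1}(A) may legitimately be invoked in the $f_{[w,-1]}$-AT branch, and that in that branch the vertex of budget $1$ to which Observation~\ref{obs-1}(A) is applied is $w$, so the budget drops at $v$ rather than at $w$, which is what yields the claimed $f_{[v,-1]}$-AT conclusion.
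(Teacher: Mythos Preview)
Your argument is correct and follows essentially the same route as the paper: peel off $uw$ via Lemma~\ref{lem-subgraph}, and in the $f_{[w,-1]}$-branch invoke Observation~\ref{obs-1}(A) on $vw$. The only difference is that in the $f_{[u,-1]}$-branch the paper skips your second application of Lemma~\ref{lem-subgraph} and concludes directly that $G-w$ is $f_{[u,-1]}$-AT, since the remaining edge $vw$ is incident to a degree-$1$ vertex and hence lies in no Eulerian sub-digraph; your extra case split is harmless but unnecessary.
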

\begin{proof}
	By  Lemma \ref{lem-subgraph},  $G-wu$ is $f_{[u,-1]}$-AT or $f_{[w,-1]}$-AT. If $G-wu$ is $f_{[u,-1]}$-AT, then   $G-w$ is $f_{[u,-1]}$-AT. If  $G-wu$ is $f_{[w,-1]}$-AT, then since $f_{[w,-1]}(w)=1$, 
	$G-w=G-\{wu, wv\}$ is $f_{[v,-1]}$-AT.
\end{proof}

The following lemma is also easy and well-known (cf. \cite{ZB}).

\begin{lemma}
	\label{lem1}
	Assume $D$ is an orientation of $G=(V,E)$, $X \subseteq V$ and all arcs between $X$ and $V-X$ are oriented from $X$ to $V-X$. Then ${\rm diff} (D) = {\rm diff}(D[X]) \times {\rm diff}(D[V-X])$, where $D[X], D[V-X]$ are sub-digraphs induced by $X$ and $V-X$.
\end{lemma}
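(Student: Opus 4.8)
The plan is to show that the hypothesis on the crossing arcs forces every Eulerian sub-digraph of $D$ to avoid them altogether; once that is established, the Eulerian sub-digraphs of $D$ split as disjoint unions over $X$ and $V-X$, and the product formula follows from a short parity count.

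First I would prove: if $H \in \mathcal{E}(D)$, then $H$ contains no arc between $X$ and $V-X$. To see this, sum $d_{D[H]}^+(v)-d_{D[H]}^-(v)$ over all $v \in X$. An arc of $H$ with both ends in $X$ contributes $0$ to this sum; an arc with no end in $X$ contributes nothing; and an arc of $H$ between $X$ and $V-X$, which by hypothesis is oriented from $X$ to $V-X$, contributes exactly $+1$ (it adds $1$ to the out-degree of its tail in $X$, and its head lies outside $X$). Hence the sum equals the number of arcs of $H$ crossing between $X$ and $V-X$. But $D[H]$ being Eulerian means every summand is $0$, so that number is $0$; equivalently $H \subseteq E(D[X]) \cup E(D[V-X])$.

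Next, for $H \in \mathcal{E}(D)$ set $H_1 = H \cap E(D[X])$ and $H_2 = H \cap E(D[V-X])$, so that $H$ is the disjoint union $H_1 \cup H_2$. Since $X$ and $V-X$ are disjoint, for $v \in X$ the degrees $d_{D[H]}^+(v), d_{D[H]}^-(v)$ coincide with $d_{D[H_1]}^+(v), d_{D[H_1]}^-(v)$, and for $v \in V-X$ they coincide with $d_{D[H_2]}^+(v), d_{D[H_2]}^-(v)$; hence $D[H]$ is Eulerian if and only if both $D[H_1]$ and $D[H_2]$ are. Conversely, every pair $(H_1,H_2) \in \mathcal{E}(D[X]) \times \mathcal{E}(D[V-X])$ yields $H_1 \cup H_2 \in \mathcal{E}(D)$. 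Thus $H \mapsto (H_1,H_2)$ is a bijection from $\mathcal{E}(D)$ onto $\mathcal{E}(D[X]) \times \mathcal{E}(D[V-X])$, and $|H| = |H_1| + |H_2|$.

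Finally I would split by parity: $|H|$ is even exactly when $|H_1|$ and $|H_2|$ have the same parity, so
\[
|\mathcal{E}_e(D)| = |\mathcal{E}_e(D[X])|\,|\mathcal{E}_e(D[V-X])| + |\mathcal{E}_o(D[X])|\,|\mathcal{E}_o(D[V-X])|,
\]
\[
|\mathcal{E}_o(D)| = |\mathcal{E}_e(D[X])|\,|\mathcal{E}_o(D[V-X])| + |\mathcal{E}_o(D[X])|\,|\mathcal{E}_e(D[V-X])|,
\]
and subtracting gives ${\rm diff}(D) = {\rm diff}(D[X]) \times {\rm diff}(D[V-X])$, as claimed. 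The only step carrying real content is the first one (no Eulerian sub-digraph can use a crossing arc); everything afterward is bookkeeping that works precisely because $X$ and $V-X$ partition $V$.
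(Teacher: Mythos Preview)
Your proof is correct; the argument that no Eulerian sub-digraph can use a crossing arc (via the degree sum over $X$), followed by the bijection $\mathcal{E}(D) \leftrightarrow \mathcal{E}(D[X]) \times \mathcal{E}(D[V-X])$ and the parity count, is exactly the standard justification. The paper itself does not supply a proof of this lemma---it merely labels it easy and well-known and points to \cite{ZB}---so there is nothing to compare against beyond noting that your write-up is the expected one.
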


 \begin{definition}
 	\label{def-nice}
 	Assume $G$ is a 2-connected plane graph and $e_1=v_1v_2$ is a boundary edge.   Assume $M$ is a matching of $G$ containing $e_1$. Let $f_{G, v_1v_2}, f_{G, v_1v_2,M} \in \mathbb{N}^G$ be defined as 
 	\[
 	f_{G, v_1v_2}(x) = \begin{cases} 1, &\text{ if $x \in \{v_1,v_2\}$}, \cr
 	3, &\text{if $x \in B(G)-\{v_1,v_2\}$} \cr
 	5, &\text{ if $x \in V(G)-B(G)$}.\cr
 	\end{cases}
 	\]
 	\[
 	f_{G, v_1v_2, M}(x) = \begin{cases} 1, &\text{ if $x \in \{v_1,v_2\}$}, \cr
 	3-d_M(x), &\text{if $x \in B(G)-\{v_1,v_2\}$} \cr
 	4, &\text{  if $x \in V(G)-B(G)$}.\cr
 	\end{cases}
 	\]
 \end{definition}
 
 The result that stated in the abstract follows from the following more technical theorem.
 
 \begin{theorem}
 	\label{thm-main}
 	Assume  $G$ is a 2-connected plane graph and $e_1=v_1v_2$ is a boundary edge. Then $G-e_1$ is $f_{G, v_1v_2}$-AT, and $G$ has a matching $M$ containing $e_1$ such that $G-M$ is $f_{G, v_1v_2,M}$-AT.  
 \end{theorem}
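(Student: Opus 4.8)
The plan is to prove both statements of Theorem~\ref{thm-main} simultaneously by induction on $|V(G)|+|E(G)|$, exploiting the fact that the two weight functions $f_{G,v_1v_2}$ and $f_{G,v_1v_2,M}$ are treated in parallel. The base case is a short cycle (or a single edge), where one orients the cycle consistently so that the Eulerian subdigraphs are only $\emptyset$ and the whole cycle, giving ${\rm diff}=\pm 2\neq 0$, and checks the out-degree bounds against $f$. For the induction step, the heart of the matter is the classical ``reduction along the outer face'' technique used in Thomassen-style proofs of $5$-choosability: look at the boundary cycle $B(G)$ and either find a chord of the outer face, or locate a vertex on $B(G)$ all of whose neighbours lie on $B(G)\cup(\text{one more cycle})$.

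More concretely, here is how I would carry out the step. Let $C=B(G)$ be the outer cycle, written $v_1,v_2,\dots,v_p$. \textbf{Case 1: $C$ has a chord $v_iv_j$.} This chord splits $G$ into two $2$-connected plane graphs $G_1,G_2$ sharing the edge $v_iv_j$, with $e_1=v_1v_2$ lying (say) in $G_1$. Apply induction to $G_1$ with boundary edge $v_1v_2$, and to $G_2$ with boundary edge $v_iv_j$; then glue using Lemma~\ref{lem1} after arranging all arcs to flow the right way across the cut $\{v_i,v_j\}$, and check that the glued weight function is dominated by $f_{G,v_1v_2}$ (resp.\ $f_{G,v_1v_2,M}$), which works because a vertex that was on the boundary of both pieces still gets weight $3$, and the $-1$'s from the two applications can be made to land on the shared vertices $v_i,v_j$ consistently. \textbf{Case 2: $C$ is chordless.} Let $v_p$ (the boundary neighbour of $v_1$ other than $v_2$) have neighbours $v_1=u_0,u_1,\dots,u_m=v_{p-1}$ in order; since $G$ is $2$-connected and $C$ is chordless, $u_1,\dots,u_{m-1}$ are interior vertices, and $G'=G-v_p$ is $2$-connected with outer cycle $C'=(v_1,v_2,\dots,v_{p-1},u_{m-1},\dots,u_1)$. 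Apply induction to $G'$ with boundary edge $v_1v_2$ to get an $f_{G',v_1v_2}$-AT orientation $D'$ of $G'-v_1v_2$; now add $v_p$ back, orienting $v_pv_1$ and $v_pv_{p-1}$ and all edges $v_pu_i$ out of $v_p$, so $d^+(v_p)=m+1$. One must verify: (i) $m+1 \le f_{G,v_1v_2}(v_p)-1 = 3-1 = 2$? — no, this fails in general, so the orientation of the edges at $v_p$ must be chosen more cleverly.

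This is exactly where the main obstacle lies, and it is the point at which the ``simple proof'' must do something genuinely clever rather than brute Thomassen induction: one cannot afford to orient \emph{all} edges at the newly-restored boundary vertex outward. I expect the resolution to be a more careful choice of which vertex to delete and how to split the out-degree budget — e.g.\ deleting a boundary vertex $v_p$ of degree as small as possible, or splitting off a path of the outer face and using Corollary~\ref{cor-1} and Observation~\ref{obs-1}(A),(B) (which were set up precisely to handle vertices of weight $1$ and $2$ on the boundary) to push the $-1$ reductions around so that $v_p$ ends up with in-degree contributions rather than out-degree. Concretely, I would orient $v_pv_1$ \emph{into} $v_p$ (legal since after deletion we can give $v_1$ the spare budget via $f_{[v_1,-1]}$), orient $v_pv_{p-1}$ out of $v_p$, and orient the interior edges $v_pu_i$ so as to keep $d^+(v_p)\le 2$; the interior neighbours $u_1,\dots,u_{m-1}$ get weight $5$ (resp.\ $4$) in $G$ but only $5$ (resp.\ $4$) in $G'$ too, and the ones that become boundary vertices of $G'$ drop to $3$ — so I need the reduction to hand back enough budget, which is the real bookkeeping burden. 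Finally, Lemma~\ref{lem1} applied with $X=\{v_p\}$ multiplies ${\rm diff}(D'[\text{rest}])$ by ${\rm diff}$ of the single-arc-or-two-arc piece at $v_p$, keeping it nonzero. For the matching statement I would additionally track that the edge I orient into $v_p$ (or a suitable boundary edge incident to $v_p$) can be added to $M$ when $v_p\notin B(G)$ retains no matched edge, reconciling the $3-d_M(x)$ weights; ensuring $M$ stays a matching across the gluing in Case~1 and the vertex-addition in Case~2 is the secondary obstacle, handled by never matching a shared cut-vertex.
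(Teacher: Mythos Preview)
Your induction scheme and Case~1 (the chord case) match the paper and are fine. The real gap is exactly where you flag it, in Case~2, and neither of your concrete proposals closes it. Orienting $v_pv_1$ into $v_p$ is illegal: $f(v_1)=1$ forces $v_1$ to be a sink in any $f$-AT orientation, so there is no ``spare budget'' at $v_1$ to give away. And invoking Lemma~\ref{lem1} with $X=\{v_p\}$ requires every arc between $v_p$ and the rest to leave $v_p$; the moment you orient some $u_i\to v_p$ to keep $d^+(v_p)\le 2$, that lemma no longer applies, and you have no control over ${\rm diff}$ of the new Eulerian subdigraphs that pass through $v_p$. Choosing $v_p$ of minimum degree does not help either, since in a chordless triangulated near-triangulation the boundary vertex adjacent to $v_1$ can have arbitrarily many interior neighbours.

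The paper's resolution is a parity trick you have not anticipated. One orients $(v_p,v_1)$, $(v_p,v_{p-1})$ and all $(u_i,v_p)$, but first enlarges $G$ to an auxiliary graph $G''$ by attaching, for each $i$, a new length-$2$ path $u_i\,w_i\,v_p$ oriented $(u_i,w_i),(w_i,v_p)$, parallel to the arc $(u_i,v_p)$. Every Eulerian subdigraph using the arc $(u_i,v_p)$ then pairs off, with opposite parity, against the one obtained by rerouting through $w_i$; hence all Eulerian subdigraphs touching $v_p$ cancel and ${\rm diff}(D_2)={\rm diff}(D_1)\ne 0$. In $D_2$ one has $d^+(v_p)=2$ and $d^+(u_i)\le 2+2=4$, which meets the budget $5$ at the now-interior $u_i$ and yields the first statement after passing to the subgraph $G-e_1\subseteq G''-e_1$ via Lemma~\ref{lem-subgraph}. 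For the matching statement the interior budget is only $4$, so one extra unit must be recovered at each $u_i$; this is exactly what Corollary~\ref{cor-1} is for --- applied not to boundary paths of $G$ as you guessed, but to delete the auxiliary degree-$2$ vertices $w_i$ one by one, each deletion returning a $-1$ to $u_i$ or to $v_p$. Observation~\ref{obs-1}(B) ensures the $-1$ lands at $v_p$ at most once, and when it does, the corresponding edge $u_iv_p$ is added to $M$.
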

 \begin{proof}
 The proof is by induction on $|V(G)|$. If $G$ is a triangle $(v_1,v_2,v_3)$, then let $D$ be the orientation of $G-e_1$ with $v_3$ be a source. Then $D$ is an $f_{G,v_1v_2}$-AT orientation of $G-e_1$, and  also  an $f_{G,v_1v_2,M}$-AT orientation $G-M$ with $M=\{e_1\}$. 
 	
Assume $|V(G)| \ge 4$ and Theorem \ref{thm-main} is true for smaller plane graphs. 

\bigskip
\noindent
{\bf Case 1} $B(G)$ has a chord $e=xy$. 

Let $G_1,G_2$ be the subgraphs of $G$ separated by $e$ (i.e., $V(G_1) \cap V(G_2) = \{x,y\}$ and $V(G_1) \cup V(G_2) = V(G)$), and assume $v_1v_2 \in E(G_1)$. By induction hypothesis, $G_1-e_1$ has an $f_{G_1,v_1v_2}$-AT orientation $D_1$ and $G_2-xy$ has $f_{G_2,xy}$-AT orientation $D_2$.  Let $D=D_1 \cup D_2$. It follows from Lemma \ref{lem1} that ${\rm diff} (D) = {\rm diff}(D_1) \times  {\rm diff}(D_2) \ne 0$.  
Hence $D$ is an $f_{G,v_1v_2}$-AT orientation of $G-e_1$.

Also by induction hypothesis, $G_1$ has a matching $M_1$ containing $e_1$ so that $G_1-M_1$ has an $f_{G_1,v_1v_2, M_1}$-AT orientation $D'_1$, and $G_2$ has a matching $M_2$ containing $e=xy$ so that $G_2-M_2$ has an $f_{G_2,xy, M_2}$-AT orientation $D'_2$. 
 	Let $M=M_1 \cup (M_2 -\{e\})$ and let $D'=D'_1 \cup D'_2$.  Then $M$ is a matching of $G$ containing $e_1$.  By Lemma \ref{lem1},  ${\rm diff}(D') = {\rm diff} (D'_1) \times {\rm diff}(D'_2) \ne 0$, and hence $D'$ is an $f_{G,v_1v_2, M}$-AT orientation of $G-M$.

 \bigskip
 \noindent
 {\bf Case 2} $B(G)$ has no chord. 	
 
 Assume $B(G)=(v_1,v_2, \ldots, v_n)$ and let $G'=G-v_n$. Assume $N_G(v_n) =\{v_1, u_1,u_2, \ldots, u_k, v_{n-1}\}$.  Let $U = \{u_1,u_2, \ldots,u_k\}$. By induction hypothesis,   $G'-e_1$ has an $f_{G',v_1v_2}$-AT orientation $D_1$.
 Let $G''$ be obtained from $G$ by adding paths
 $P_1,P_2,\ldots, P_k$, where $P_i=(u_i,w_i,v_n)$, and $w_i$ are new vertices. 
 Let $b_1=(v_n,v_{n-1}), b_2=(v_n, v_1)$    and for $i=1,2,\ldots,k$, $a_i=(u_i,v_n), a'_i=(u_i, w_i), a''_i=(w_i, v_n)$. Let $$Z= 
 \{a_i, a'_i, a''_i: i=1,2,\ldots, k\} \cup \{b_1,b_2\}.$$ 
 	Let $D_2$  be the orientation of $G''-e_1$ obtained from $D_1$ by adding arcs in $Z$. 
 For an arc $a$ of $D_2$, let 
 	$$\mathcal{E}(D_2, a)= \{H \in \mathcal{E}(D_2): a \in H\}.$$ 
 	
 It is easy to see that  
  $$\mathcal{E}(D_2) = \mathcal{E}(D_1) \cup (\cup_{i=1}^k  (\mathcal{E}(D_2, a_i) \cup 
 	\mathcal{E}(D_2, a'_i)) ),$$
 	where   the unions are disjoint  unions.  
The map $\phi: \mathcal{E}(D_2, a_i) \to \mathcal{E}(D_2, a'_i)$ defined as $\phi(H) = (H - a_i) \cup \{a'_i,a''_i\}$ is a one-to-one correspondence, and $H$ and $\phi(H)$ have opposite parities. So 
$$|\mathcal{E}_e(D_2) \cap (\cup_{i=1}^k  (\mathcal{E}(D_2, a_i) \cup 
\mathcal{E}(D_2, a'_i)) )| =  |\mathcal{E}_o(D_2) \cap (\cup_{i=1}^k ( \mathcal{E}(D_2, a_i) \cup 
\mathcal{E}(D_2, a'_i)) )|$$  and hence   ${\rm diff}(D_2) = {\rm diff}(D_1) \ne 0.$
Let $g(v) =d_{D_2}^+(v)+1$ for $v \in V(G'')$. Then $G''-e_1$ is $g$-AT, and by Lemma \ref{lem-subgraph}, $G-e_1$ is $g$-AT. It is easy to see that $g \le f_{G, v_1v_2}$. Hence $G-e_1$ is $f_{G, v_1v_2}$-AT.

Similarly, by induction hypothesis, 
$G'$ has a matching $M_1$ containing $e_1$ so that $G'-M_1$ has an $f_{G',v_1v_2, M_1}$-AT orientation $D'_1$.
Let   $D'_2$  be the orientation of $G''-M_1$ obtained from $D'_1$   by adding arcs in $Z$.  
For the same reason as above, we have $ {\rm diff}(D'_2) = {\rm diff}(D'_1) \ne 0$.
Thus  $D'_2$ is an Alon-Tarsi orientation of $G''-M_1$. 

Let  $g(v) = d_{D'_2}^+(v)+1$. Then $G''-M_1$ is $g$-AT. It follows from the construction of $D'_2$ that 
 $g(v) \le f_{G, v_1v_2, M_1}(v)$ for $ v \in V(G)-U.$ 
   For $i=1,2,\ldots, k$, if $d_{D'_1}^+(u_i) \le 1$, then  
we also have $g(u_i) = d_{D'_1}^+(u_i) +2+1 \le 4 =    f_{G, v_1v_2, M_1}(u_i)$. In particular, if $u_i$ is covered by the matching $M_1$, then  $g(u_i) \le  f_{G, v_1v_2, M_1}(u_i)$.  However, if $d_{D'_1}^+(u_i) =2$, then $g(u_i)= 5=f_{G, v_1v_2, M_1}(u_i)+1$.

Let $G_k=G''$ and for $i=0, 1, \ldots, k-1$, let $G_i = G_{i+1}-w_{i+1}$. Apply  Corollary \ref{cor-1} to $G_{k-1}, G_{k-2}, \ldots, G_0$ one by one, we conclude that for $i=k-1,  \ldots, 0$, 
$G_i-M_1$ is $g_i$-AT, where $g_k=g$ and  
for $i=k-1,  \ldots, 0$, either (i) $g_i = g_{i+1, [u_{i+1},-1]}$,  or (ii) $g_i = g_{i+1, [v_{n},-1]}$. 
 
Note that 
$g(v_1)=1$ and $g(v_n)=3$. By (B) of Observation \ref{obs-1}, $g_i(v_n) \ge 2$ for all $i$. Hence   (ii) occurs at most once. If (ii) never occurs, then $G_0-M_1=G-M_1$ is  $g_{[U,-1]}$-AT. As $g_{[U,-1]} \le f_{G, v_1v_2, M_1}$, we are done. 

Assume (ii) occurs exactly once at some $i$. Then $G_0-M_1=G-M_1$ is  $g_{[(U-\{u_{i}\}) \cup \{v_n\},-1]}$-AT. 
If $u_i$ is covered by $M_1$, then we have $g_{[(U-\{u_{i}\}) \cup \{v_n\},-1]} \le f_{G, v_1v_2, M_1}$, we are done. Otherwise, let $M=M_1 \cup \{u_iv_n\}$.
Note that if $D$ is a $g_{[(U-\{u_{i}\}) \cup \{v_n\},-1]}$-orientation of $G-M_1$, then $v_n$ has only one out-neighbour $v_1$, which is a sink. So any arc in $D$ incident to $v_n$ is not contained in any Eulerian sub-digraph of $D$. Hence $D-(u_i,v_n)$ is   an AT-orientation of $G-M$.  So  $G-M$ is $g_{[ U  \cup \{v_n\},-1]}$-AT. As $g_{[ U  \cup \{v_n\},-1]} \le f_{G, v_1v_2,M}$, we have $G-M$ is $ f_{G, v_1v_2,M}$-AT. 
	\end{proof}
	
\bibliographystyle{unsrt}

\end{document}